\documentclass[10pt]{amsart}
\usepackage{amssymb,amsthm,amsmath,amsfonts}
\usepackage{hyperref}
\usepackage{pstricks}
\usepackage{graphicx}
\usepackage{pst-node}
\textwidth=16cm
\oddsidemargin=0pt
\evensidemargin=0pt

\def\R{{\mathbb R}}
\def\Rr{{\mathcal R}}
\def\C{{\mathbb C}}

\def\Z{{\mathbb Z}}

\def\<{\langle}
\def\>{\rangle}

\def\E{{\mathbb E}}
\def\U{{\mathbb U}}
\def\V{{\mathbb V}}

\def\I{\mathbb I}

\def\X{\mathbb X}

\def\Y{\mathbb Y}

\newcommand{\be}{\begin{equation}}
\newcommand{\ee}{\end{equation}}
     \newtheorem{theorem}{Theorem}[section]
       \newtheorem{proposition}[theorem]{Proposition}
       
      \newtheorem{lemma}[theorem]{Lemma}

\numberwithin{equation}{section}
\begin{document}
\title{DUAL LUKACS REGRESSIONS OF NEGATIVE ORDERS FOR NON-COMMUTATIVE VARIABLES}
\author[K. Szpojankowski]{Kamil Szpojankowski}
\address{Wydzia\l{} Matematyki i Nauk Informacyjnych\\
Politechnika Warszawska\\
ul. Koszykowa 75\\
00-662 Warszawa, Poland\\
k.szpojankowski@mini.pw.edu.pl}

\subjclass{2010 AMS Subject Classification: 46L54, 62E10}
\keywords{Lukacs characterization,
conditional moments, freeness, free-Poisson distribution, free-Binomial distribution}

\begin{abstract}
In the paper we study characterizations of probability measures in free probability. By constancy of regressions for random variable $\V^{1/2}(\I-\U)\V^{1/2}$ given by $\V^{1/2}\U\V^{1/2}$, where $\U$ and $\V$ are free, we characterize free Poisson and free binomial distributions. Our paper is a free probability analogue of results known in classical probability \cite{BobWes2002Dual}, where gamma and beta distributions are characterized by constancy of $\E\left((V(1-U))^{i}|UV\right)$, for $i \in \{-2,-1,1,2\}$. This paper together with previous results \cite{SzpojanWesol} exhaust all cases of characterizations from \cite{BobWes2002Dual}.
\end{abstract}
\maketitle

\section{Introduction}
 
One of the most famous characterizations in classical probability is the Lukacs theorem \cite{Lukacs}, saying that if $X$ and $Y$ are positive, independent random variables such that,
\be\label{uav}
U=\frac{X}{X+Y}\qquad \mbox{and}\qquad V=X+Y,
\ee
are independent, then $X$ and $Y$ have gamma distribution $G(p,a)$ and $G(q,a)$. Here by the gamma distribution $G(r,c)$, $r,c>0$, we understand the probability distribution with density
$$
f(x)=\frac{c^r}{\Gamma(r)}\,x^{r-1}\,e^{-cx}\,I_{(0,\infty)}(x).
$$
This result was generalized in many directions. For example, it is known that assumptions of independence of $U$ and $V$ can be weakened to constancy of regressions (see \cite{BolHark}, \cite{LahaLukacs}),
\begin{align}
\label{LL}
\E\left(\frac{X}{X+Y}\bigg|X+Y\right)=&c,\\
\E\left(\left(\frac{X}{X+Y}\right)^2\bigg|X+Y\right)=&d.\nonumber
\end{align}
In \cite{BobWes2002Dual} authors proved so called dual Lukacs regressions which says that if $U$ and $V$ are independent, such that $U$ is supported on interval $(0,1)$, and $V$ is positive and
\begin{align*}
\E\left(\left(V(1-U)\right)^{i}|UV\right)=&c,\\
\E\left(\left(V(1-U)\right)^{j}|UV\right)=&d,
\end{align*}
for one of the pairs $(i,j)\in\{(-2,-1),\, (-1,1),\, (1,2)\}$, then $U$ is beta distributed, and $V$ is gamma distributed. Note that in the Lukacs theorem we have $X=UV$ and $Y=V(1-U)$. By this one can see that the above result is dual to the characterization based on \eqref{LL}, where independence of $X$ and $Y$, and constancy of regressions of $U$ given by $V$ are assumed.

It has been noticed before that characterizations of distributions in classical and free probability can be closely related. Many of classical characterizations has counterparts in free probability. Explicit example of such analogy is the Bernstein theorem \cite{Bernstein} saying that, if $X,Y$ are independent, such that $X+Y$ and $X-Y$ are independent then $X$ and $Y$ have normal distribution. Free analogue of this theorem was proved in \cite{NicaChar}. It says that for free $\X$ and $\Y,$ if $\X+\Y$ and $\X-\Y$ are free then $\X$ and $\Y$ have Wigner (semicircular) distribution, which in free probability plays the role of normal law in classical probability.\\ We have to note that not all characterizations of distributions of independent random variables, translated to the language of free random variables, remain characterizations. Famous Cram\'er's theorem says that if $X$ and $Y$ are independent such that $X+Y$ has normal distribution then $X$ and $Y$ have normal distribution. Free analogue of this theorem turns out not to be true. In \cite{BerVoicSuperConverg} authors proved that there exist free random variables $\X$ and $\Y$ such that $\X+\Y$ has Wigner distribution, but $\X$ and $\Y$ are not Wigner distributed.
 
The Lukacs theorem and other characterizations related to it, turn out to have free counterparts (see \cite{BoBr2006}, \cite{EjsmontLL}, \cite{EjsmontMP} \cite{EjsmontTD}). It should be noted, that in \cite{BoBr2006} authors proved a free analog of the Lukacs theorem only in one direction, that is, if $\X$ and $\Y$ are free such that $\X+\Y$ and $(\X+\Y)^{-1/2}\X(\X+\Y)^{-1/2}$ are free, then $\X$ and $\Y$ have free Poisson distribution. It is still not known if free Poisson distributed $\X$ and $\Y$ have the required property.

In the previous paper \cite{SzpojanWesol} we investigated a free analogue of dual Lukacs regressions from \cite{BobWes2002Dual} in the case $(i,j)=(1,2)$. Our aim in this paper is to prove free analogues for the remaining cases. We will do this by developing technique from \cite{SzpojanWesol}.

The paper is organized as follows: Section 2 is devoted to give basics of free probability, in Section 3 we recall some facts from \cite{SzpojanWesol} and prove an auxiliary lemma, in Section 4 we give two characterizations of free Poisson and free binomial distributions, which are free analogues of the dual Lukacs regressions for $(i,j)\in \{(-1,1),\, (-2,-1)\}$.

\section{Preliminaries} We follow our previous paper \cite{SzpojanWesol}, to give basics of the free probability. More detailed introductions to the free probability can be found in \cite{NicaSpeicherLect} or \cite{VoiDykNica}. We will recall basic notions of non-commutative probability which are necessary for this paper.

A non-commutative probability space is a pair $(\mathcal{A},\varphi)$, where $\mathcal{A}$ is a unital algebra over $\C\,$ and $\varphi:\mathcal{A}\to\C\,$ is a linear functional, which is normalized, that is $\varphi(\I)=1$, where $\I$ is unit of algebra $\mathcal{A}$. Any element $\X$ of $\mathcal{A}$ is called a (non-commutative) random variable.

Let $H$ be a Hilbert space. By $\mathcal{B}(H)$ denote the space of bounded linear operators on $H$. For $\mathcal{A}\subset\mathcal{B}(H)$ and $\varphi$ a linear functional defined on $\mathcal{A}$ we say that $(\mathcal{A},\,\varphi)$ is a $W^*$-probability space when $\mathcal{A}$ is a von Neumann algebra and $\varphi$ is a normalized, faithfull and tracial state, that is $\varphi(\X^2)=0$ iff $\X=0$ and $\varphi(\X\,\Y)=\varphi(\Y\,\X)$ for any $\X,\Y\in\mathcal{A}$.

The $*$-distribution $\mu$ of a self-adjoint element $\X\in\mathcal{A}\subset\mathcal{B}(H)$ is a probabilistic measure on $\R$ such that $$\varphi(\X^r)=\int_{\R}\,t^r\,\mu(dt),\qquad \forall\,r=1,2,\ldots$$ In a setting of a general non-commutative probability space $(\mathcal{A},\varphi)$, we say that the distribution of the family $(\X_i)_{i=1,\ldots,q}$ is a linear functional $\mu_{\X_1,\ldots,\X_q}$ on the algebra $\C\,\langle x_1,\ldots,x_q\rangle$ of polynomials of non-commuting variables $x_1,\ldots,x_q$, defined by $$\mu_{\X_1,\ldots,\X_q}(P)=\varphi(P(\X_1,\ldots,\X_q))\qquad\forall\,P\in\C\,\langle x_1,\ldots,x_q\rangle.$$

Unital subalgebras $\mathcal{A}_i\subset \mathcal{A}$, $i=1,\ldots,n$, are said to be freely independent if $\varphi(\X_1,\ldots,\X_k)=0$ for $\X_j\in \mathcal{A}_{i(j)}$, where $i(j)\in\{1,\ldots,n\}$, such that $\varphi(\X_j)=0$, $j=1,\ldots,k$, if neighbouring elements are from different subalgebras, that is $i(1)\ne i(2)\ne \ldots \ne i(k-1)\ne i(k)$. Similarly, random variables $\X,\,\Y\in\mathcal{A}$ are free (freely independent) when subalgebras generated by $(\X,\,\I)$ and $(\Y,\,\I)$ are freely independent (here $\I$ denotes the identity operator).

For free random variables $\X$ and $\Y$ having distributions $\mu$ and $\nu$, respectively, the distribution of $\X+\Y$, denoted by $\mu\boxplus\nu$, is called free convolution of $\mu$ and $\nu$.

For self-adjoint and free $\X$, $\Y$ with distributions $\mu$ and $\nu$, respectively, and $\X$ positive, that is when the support of $\mu$ is a subset of $(0,\infty)$, free multiplicative convolution of $\mu$ and $\nu$ is defined as the distribution of $\sqrt{\X}\,\Y\sqrt{\X}$ and denoted by $\mu\boxtimes\nu$. Due to the tracial property of $\varphi$ the moments of $\Y\,\X$, $\X\,\Y$ and $\sqrt{\X}\,\Y\sqrt{\X}$ match.

Let $\chi=\{B_1,B_2,\ldots\}$ be a  partition of the set of numbers $\{1,\ldots,k\}$. A partition $\chi$ is a crossing partition if there exist distinct blocks $B_r,\,B_s\in\chi$ and numbers $i_1,i_2\in B_r$, $j_1,j_2\in B_s$ such that $i_1<j_1<i_2<j_2$. Otherwise $\chi$ is called a non-crossing partition. The set of all non-crossing partitions of $\{1,\ldots,k\}$ is denoted by $NC(k)$.

For any $k=1,2,\ldots$, cumulants of order $k$ are defined recursively as $k$-linear maps $\mathcal{R}_k:\mathcal{A}^k\to\C\,$ through equations
$$
\varphi(\Y_1\ldots\Y_m)=\sum_{\chi\in NC(m)}\,\prod_{B\in\chi}\,\mathcal{R}_{|B|}(\Y_i,\,i\in B)
$$
holding for any $\Y_i\in\mathcal{A}$, $i=1,\ldots,m$, and any $m=1,2,\ldots$,
with $|B|$ denoting the size of the block $B$.

Freeness can be characterized in terms of behaviour of cumulants in the following way: Consider unital subalgebras $(\mathcal{A}_i)_{i\in I}$ of an algebra $\mathcal{A}$ in a non-commutative probability space $(\mathcal{A},\,\varphi)$. Subalgebras $(\mathcal{A}_i)_{i\in I}$ are freely independent iff for any $n=2,3,\ldots$ and for any $\X_j\in\mathcal{A}_{i(j)}$ with $i(j)\in I$, $j=1,\ldots,n$ any $n$-cumulant
$$
\mathcal{R}_n(\X_1,\ldots,\X_n)=0
$$
if there exists a pair $k,l\in\{1,\ldots,n\}$ such that $i(k)\ne i(l)$.

In sequel we will use the following formula from \cite{BozLeinSpeich} which connects cumulants and moments for non-commutative random variables
\be\label{BLS}
\varphi(\X_1\ldots\X_n)=\sum_{k=1}^n\,\sum_{1<i_2<\ldots<i_k\le n}\,\mathcal{R}_k(\X_1,\X_{i_2},\ldots,\X_{i_k})\,\prod_{j=1}^k\,\varphi(\X_{i_j+1}\ldots\X_{i_{j+1}-1})
\ee
with $i_1=1$ and $i_{k+1}=n+1$ (empty products are equal 1).

The classical notion of conditional expectation has its non-commutative counterpart in the case when $(\mathcal{A},\varphi)$ is a  $W^*$-probability spaces, that is when $\mathcal{A}$ is necessarily a von Neumann algebra. Namely, if $\mathcal{B}\subset \mathcal{A}$ is a von Neumann subalgebra of the von Nuemann algebra $\mathcal{A}$, then there exists a faithful normal projection from $\mathcal{A}$ onto $\mathcal{B}$, denoted by $\varphi(\cdot|\mathcal{B})$, such that $\varphi(\varphi(\cdot|\mathcal{B}))=\varphi(\cdot)$. This projection $\varphi(\cdot|\mathcal{B})$ is a non-commutative conditional expectation given subalgebra $\mathcal{B}$. If $\X\in \mathcal{A}$ is self-adjoint then $\varphi(\X|\mathcal{B})$ defines a unique self-adjoint element in $\mathcal{B}$ satisfying the above equation. For $\X\in\mathcal{A}$ by $\varphi(\cdot|\X)$ we denote conditional expectation give then von Neumann subalgebra $\mathcal{B}$ generated by $\X$ and $\I$. Non-commutative conditional expectation has many properties analogous to those of classical conditional expectation. For more details one can consult e.g. \cite{Takesaki}. Here we state two of them we need in the sequel. The proofs can be found in \cite{BoBr2006}.
\begin{lemma}\label{conexp} Consider a $W^*$-probability space $(\mathcal{A},\varphi)$.
\begin{itemize}
\item If $\X\in\mathcal{A}$ and $\Y\in\mathcal{B}$, where $\mathcal{B}$ is a von Neumann subalgebra of $\mathcal{A}$, then
\be\label{ce1}
\varphi(\X\,\Y)=\varphi(\varphi(\X|\mathcal{B})\,\Y).
\ee
\item If $\X,\,\Z\in\mathcal{A}$ are freely independent then
\be\label{ce2}
\varphi(\X|\Z)=\varphi(\X)\,\mathbb{I}.
\ee
\end{itemize}
\end{lemma}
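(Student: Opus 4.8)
The plan is to treat the two assertions separately; both are immediate from the description of the non-commutative conditional expectation recalled just above.

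For \eqref{ce1} I would use that $\varphi(\cdot|\mathcal{B})$ is the unique $\varphi$-preserving conditional expectation onto $\mathcal{B}$, in particular a $\mathcal{B}$-bimodule map; equivalently, it is the restriction to $\mathcal{A}$ of the orthogonal projection of $L^2(\mathcal{A},\varphi)$ onto the $L^2$-closure of $\mathcal{B}$. From the module property, for $\Y\in\mathcal{B}$ one has $\varphi(\X\,\Y\,|\,\mathcal{B})=\varphi(\X\,|\,\mathcal{B})\,\Y$, and then applying $\varphi$ and using $\varphi\circ\varphi(\cdot|\mathcal{B})=\varphi$ gives
\[
\varphi(\X\,\Y)=\varphi\bigl(\varphi(\X\,\Y\,|\,\mathcal{B})\bigr)=\varphi\bigl(\varphi(\X\,|\,\mathcal{B})\,\Y\bigr).
\]
Alternatively one can argue purely in $L^2$: since $\mathcal{B}$ is $*$-closed and $\varphi$ is tracial, $\varphi(\X\,\Y)=\langle\X,\Y^*\rangle$, and replacing $\X$ by its projection onto $L^2(\mathcal{B},\varphi)$ does not change the pairing with $\Y^*\in\mathcal{B}$.

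For \eqref{ce2}, write $\mathcal{B}$ for the von Neumann subalgebra generated by $\Z$ and $\I$ and put $E=\varphi(\cdot|\Z)=\varphi(\cdot|\mathcal{B})$. By \eqref{ce1} and faithfulness of $\varphi$, the element $E(\X)$ is the unique member of $\mathcal{B}$ obeying $\varphi(E(\X)\,\Y)=\varphi(\X\,\Y)$ for all $\Y\in\mathcal{B}$, so it is enough to show that $\varphi(\X)\,\I$ satisfies the same relation, namely that $\varphi(\X\,\Y)=\varphi(\X)\,\varphi(\Y)$ for every $\Y\in\mathcal{B}$. I would first verify this for $\Y=W$ a word in $\Z$ and $\Z^*$: writing $\X=\X^{\circ}+\varphi(\X)\I$ and $W=W^{\circ}+\varphi(W)\I$ with $\X^{\circ}$, $W^{\circ}$ centered, freeness of $\X$ and $\Z$ forces $\varphi(\X^{\circ}W^{\circ})=0$, being a length-two alternating product of centered factors from the two free subalgebras, while $\varphi(\X^{\circ})=\varphi(W^{\circ})=0$; expanding the product then leaves precisely $\varphi(\X\,W)=\varphi(\X)\,\varphi(W)$. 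Linearity extends this to the whole $*$-algebra generated by $\Z$ and $\I$, and then normality of $\varphi$ together with strong density of that $*$-algebra in the unit ball of $\mathcal{B}$ (Kaplansky's density theorem) extends it to all $\Y\in\mathcal{B}$, giving \eqref{ce2}.

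The computational core is the single centering identity for words; the one step that needs some care is the last passage, from polynomials in $\Z,\Z^*$ to an arbitrary element of the generated von Neumann algebra, which rests on normality of $\varphi$ and a density argument rather than on freeness. All the remaining ingredients are direct consequences of the definitions recalled above.
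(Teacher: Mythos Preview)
Your argument is correct. Both items are handled in the standard way: \eqref{ce1} via the $\mathcal{B}$-bimodule property of the conditional expectation together with $\varphi\circ\varphi(\cdot|\mathcal{B})=\varphi$, and \eqref{ce2} by combining the uniqueness characterization coming from \eqref{ce1} and faithfulness with the factorization $\varphi(\X\Y)=\varphi(\X)\varphi(\Y)$ that freeness forces for $\Y$ in the algebra generated by $\Z$. The passage from polynomials in $\Z,\Z^*$ to the full von Neumann subalgebra does indeed require a continuity input such as normality of $\varphi$ (or of the conditional expectation) plus Kaplansky density, and you identify this correctly as the one nontrivial step.

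As for comparison: the paper does not give its own proof of this lemma at all---it simply records the statement and refers the reader to \cite{BoBr2006}. So your proposal supplies more detail than the paper itself, and there is no alternative argument in the text to contrast it with.
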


Now we introduce basic analytical tools used to deal with non-commutative random variables and their distributions.

For a non-commutative random variable $\X$ its $r$-transform is defined as
\be\label{rtr}
r_{\X}(z)=\sum_{n=0}^{\infty}\,\mathcal{R}_{n+1}(\X)\,z^n,
\ee
where $\mathcal{R}_{n}(\X)=\mathcal{R}_n(\X,\ldots,\X).$
In \cite{VoiculescuAdd} it is proved that $r$-transform of a random variable with compact support is analytic in a neighbourhood of zero.  From properties of cumulants it is immediate that for $\X$ and $\Y$ which are freely independent
\be\label{freeconv}
r_{\X+\Y}=r_{\X}+r_{\Y}.
\ee
This relation explicitly (in the sense of $r$-transform) defines free convolution of $\X$ and $\Y$.
If $\X$ has the distribution $\mu$, then often we will write $r_{\mu}$ instead $r_{\X}$.

Another analytical tool is an $S$-transform which works nicely with products of freely independent variables. For a noncommutative random variable $\X$ its $S$-transform, denoted by $S_{\X}$, is defined through the equation
\be\label{Str}
R_{\X}(zS_{\X}(z))=z,
\ee
where $R_{\X}(z)=zr_{\X}(z)$. For $\X$ and $\Y$ which are freely independent
\be\label{Scon}
S_{\X\,\Y}=S_{\X}\,S_{\Y}.
\ee

Cauchy transform of a probability measure $\nu$ is defined as
$$
G_{\nu}(z)=\int_{\R}\,\frac{\nu(dx)}{z-x},\qquad \Im(z)>0.
$$
Cauchy transforms and $r$-transforms are related by
\be\label{Crr}
G_{\nu}\left(r_{\nu}(z)+\frac{1}{z}\right)=z.
\ee

Finally we introduce moment generating function $M_{\X}$ of a random variable $\X$ by
\be\label{mgf}
M_{\X}(z)=\sum_{n=0}^{\infty}\,\varphi(\X^n)\,z^n.
\ee
Moment generating function and $S$-transform of $\X$ are related through
\be\label{MSr}
M_{\X}\left(\frac{z}{1+z}\,S_{\X}(z)\right)=z.
\ee
\section{Auxiliary results}
In this section we state some results, that we will use in the next section. 
First, let us recall definitions of distributions in which we will be interested further.

A non-commutative random variable $\X$ is said to be free-Poisson variable if it has Marchenko-Pastur (or free-Poisson) distribution $\nu=\nu(\lambda,\alpha)$ defined by the formula
\be\label{MPdist}
\nu=\max\{0,\,1-\lambda\}\,\delta_0+\lambda \tilde{\nu},
\ee
where $\lambda\ge 0$ and the measure $\tilde{\nu}$, supported on the interval $(\alpha(1-\sqrt{\lambda})^2,\,\alpha(1+\sqrt{\lambda})^2)$, $\alpha>0$ has the density (with respect to the Lebesgue measure)
$$
\tilde{\nu}(dx)=\frac{1}{2\pi\alpha x}\,\sqrt{4\lambda\alpha^2-(x-\alpha(1+\lambda))^2}\,dx.
$$
The parameters $\lambda$ and $\alpha$ are called the rate and the jump size, respectively.

Marchenko-Pastur distribution arises in a natural way  as an almost sure weak limit of empirical distributions of eigenvalues for random matrices of the form ${\bf X}\,{\bf X}^T$ where ${\bf X}$ is a matrix with zero mean iid entries with finite variance, in particular for Wishart matrices, (see \cite{MarchPastur}) and as a marginal distribution of a subclass of classical stochastic processes, called quadratic harnesses (see e.g. \cite{BrycWes2005}).

It is worth to note that a non-commutative variable with Marchenko-Pastur distribution arises also as a limit in law (in non-commutative sense) of variables with distributions $((1-\frac{\lambda}{N})\delta_0+\frac{\lambda}{N}\delta_{\alpha})^{\boxplus N}$ as $N\to\infty$, see \cite{NicaSpeicherLect}. Therefore, such variables are often called free-Poisson.

It is easy to see that if $\X$ is free-Poisson with distribution $\nu(\lambda,\alpha)$ then $\mathcal{R}_n(\X)=\alpha^n\lambda$, $n=1,2,\ldots$. Therefore its $r$-transform has the form
$$
r_{\nu(\lambda,\alpha)}(z)=\frac{\lambda\alpha}{1-\alpha z}.
$$

A non-commutative random variable $\Y$ is free-binomial if its distribution $\beta=\beta(\sigma,\theta)$ is defined by
\be\label{freebeta}
\beta=(1-\sigma)\mathbb{I}_{0<\sigma<1}\,\delta_0+\tilde{\beta}+(1-\theta)\mathbb{I}_{0<\theta<1}\delta_1,
\ee
where $\tilde{\beta}$ is supported on the interval $(x_-,\,x_+)$,
\begin{align}
\label{binom_supp}
x_{\pm}=\left(\sqrt{\frac{\sigma}{\sigma+\theta}\,\left(1-\frac{1}{\sigma+\theta}\right)}\,\pm\,\sqrt{\frac{1}{\sigma+\theta}\left(1-\frac{\sigma}{\sigma+\theta}\right)}\right)^2,
\end{align}
and has the density
$$
\tilde{\beta}(dx)=(\sigma+\theta)\,\frac{\sqrt{(x-x_-)\,(x_+-x)}}{2\pi x(1-x)}\,dx,
$$
where
$(\sigma,\theta)\in \left\{(\sigma,\theta):\,\frac{\sigma+\theta}{\sigma+\theta-1}>0,\,\frac{\sigma\theta}{\sigma+\theta-1}>0\right\}$.
The n-th free convolution power of distribution
$$
p\delta_0+(1-p)\delta_{1/n}
$$
is free-binomial distribution with parameters $\sigma=n(1-p)$ and $\theta=np$, which justifies the name of the distribution (see \cite{SaitohYosida}).

Its Cauchy transform is of the form (see e.g. the proof of Cor. 7.2 in \cite{CaCa})
\be\label{betaC}
G_{\sigma,\theta}(z)=\frac{(\sigma+\theta-2)z+1-\sigma-\sqrt{[(\sigma+\theta-2)z+1-\sigma]^2-4(1-\sigma-\theta)z(z-1)}}{2z(1-z)}.
\ee

Next we recall a result from \cite{SzpojanWesol}.

\begin{proposition}
\label{free_u_v}
Let $(\mathcal{A},\,\varphi)$ be a $W^*$-probability space. Let $\V$ and $\U$ in $\mathcal{A}$ be freely independent, such that $\V$ is free-Poisson with parameters $(\lambda,\,\alpha)$ and $\U$ is free-binomial with parameters $(\sigma,\theta)$, $\sigma+\theta=\lambda$. Define
\be\label{XY}
\X=\V^{\frac{1}{2}}\,\U\,\V^{\frac{1}{2}}\qquad\mbox{and}\qquad\Y=\V-\V^{\frac{1}{2}}\,\U\,\V^{\frac{1}{2}}.
\ee

Then $\X$ and $\Y$ are freely independent and their distributions are free-Poisson with parameters $(\theta,\alpha)$  and $(\sigma,\alpha)$, respectively.
\end{proposition}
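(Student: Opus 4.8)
The statement has three parts: identifying the law of $\X$, identifying the law of $\Y$, and proving that $\X$ and $\Y$ are free. The first two parts are a direct $S$-transform computation. By traciality of $\varphi$ the moments of $\X=\V^{1/2}\U\V^{1/2}$ agree with those of $\U\V$, so $\mu_\X=\beta(\sigma,\theta)\boxtimes\nu(\lambda,\alpha)$ and, by \eqref{Scon}, $S_\X=S_{\beta(\sigma,\theta)}\,S_{\nu(\lambda,\alpha)}$. From $r_{\nu(\lambda,\alpha)}(z)=\lambda\alpha/(1-\alpha z)$ together with \eqref{Str} one obtains $S_{\nu(\lambda,\alpha)}(z)=1/(\alpha(\lambda+z))$, and from the explicit Cauchy transform \eqref{betaC}, the elementary relation $G_{\sigma,\theta}(1/z)=z\,M_{\sigma,\theta}(z)$, and \eqref{MSr}, one computes $S_{\beta(\sigma,\theta)}$ as a rational function; multiplying and using $\sigma+\theta=\lambda$ yields $S_\X(z)=1/(\alpha(\theta+z))=S_{\nu(\theta,\alpha)}(z)$, so $\mu_\X=\nu(\theta,\alpha)$ by injectivity of the $S$-transform. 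For $\Y$ one first checks, using the reflection $x\mapsto 1-x$ in \eqref{freebeta} and \eqref{binom_supp} (which interchanges the two atoms and the two parameters), that $\I-\U$ is free-binomial with parameters $(\theta,\sigma)$; since $\Y=\V^{1/2}(\I-\U)\V^{1/2}$, the same computation with $\sigma$ and $\theta$ interchanged gives $\mu_\Y=\nu(\sigma,\alpha)$.

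The real content is the freeness of $\X$ and $\Y$, for which I would use the cumulant criterion: it suffices to show that every mixed free cumulant $\mathcal{R}_n(\Z_1,\dots,\Z_n)$ with $\Z_i\in\{\X,\Y\}$ not all equal vanishes, equivalently that all joint moments $\varphi(\X^{a_1}\Y^{b_1}\cdots\X^{a_k}\Y^{b_k})$ coincide with those of a genuinely free pair $\X'\sim\nu(\theta,\alpha)$, $\Y'\sim\nu(\sigma,\alpha)$. The whole joint law of $(\U,\V)$ is known (they are free, with the marginals just identified), so such a moment can be computed. First I would reduce it: writing $\X^a=\V^{1/2}\U(\V\U)^{a-1}\V^{1/2}$ and $\Y^b=\V^{1/2}(\I-\U)(\V(\I-\U))^{b-1}\V^{1/2}$, collapsing $\V^{1/2}\V^{1/2}=\V$ at the junctions of consecutive blocks, and using traciality at the cyclic junction, one rewrites the moment as $\varphi$ of a cyclically alternating word $u_1\V u_2\V\cdots u_N\V$ with each $u_i\in\{\U,\I-\U\}$. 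Expanding by the moment--cumulant formula and using freeness of $\U$ and $\V$, only cumulant blocks made up entirely of copies of $\V$, or entirely of elements of $W^*(\U)$, survive, and a $\V$-block of size $j$ contributes $\mathcal{R}_j(\V)=\alpha^j\lambda$. This presents the moment as a sum over non-crossing partitions with weights assembled from $\alpha$, $\lambda$ and the (joint) moments of $\U$ and $\I-\U$.

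The same moment--cumulant expansion applied to the free pair $(\X',\Y')$ yields a sum of exactly the same combinatorial shape, but with the weight of an $\X'$-block of size $j$ equal to $\mathcal{R}_j(\X')=\alpha^j\theta$ and that of a $\Y'$-block equal to $\alpha^j\sigma$. The proof then comes down to the identity between these two non-crossing sums; this is where the constraint $\sigma+\theta=\lambda$ and, above all, the precise values of the free-binomial moments of $\U$ (and of $\I-\U$) in terms of $\sigma,\theta$ must be used, and I expect this combinatorial matching to be the main obstacle. The most efficient way to do it is probably not partition by partition but at the level of generating functions --- pairing the moment series of the alternating words against the $r$-transforms of $\V$, $\U$ and $\I-\U$, which reduces the identity to a functional equation that can be checked directly; it is natural to isolate the required moment identity as a separate lemma (as is done in Section 3). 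Once all mixed moments of $\X$ and $\Y$ are shown to match those of $(\X',\Y')$, the cumulant criterion gives freeness and the proposition follows.
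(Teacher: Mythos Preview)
The paper does not prove this proposition at all: it is introduced with the sentence ``Next we recall a result from \cite{SzpojanWesol}'' and then stated without proof, so there is no argument in the present paper to compare against. Anything you produce here is being measured against a citation, not a proof.

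On your approach: the identification of the marginals via $S$-transforms is correct and standard, and your observation that $\I-\U$ is free-binomial with parameters $(\theta,\sigma)$ (so that the same computation with $\sigma$ and $\theta$ swapped gives $\mu_\Y=\nu(\sigma,\alpha)$) is the right symmetry. The freeness argument, however, is only a sketch, and you acknowledge this yourself: after rewriting a general mixed moment as $\varphi(u_1\V u_2\V\cdots u_N\V)$ with $u_i\in\{\U,\I-\U\}$ and expanding by the moment--cumulant formula, you still have to match the resulting non-crossing sum against the corresponding expansion for a genuinely free pair $(\X',\Y')$. This matching is exactly the heart of the matter --- it is where the specific cumulant/moment structure of the free-binomial law, and not just the constraint $\sigma+\theta=\lambda$, must enter --- and you have not carried it out. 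Your parenthetical remark that ``it is natural to isolate the required moment identity as a separate lemma (as is done in Section~3)'' is also a misreading: Section~3 of this paper does not prove any such moment identity; it merely quotes the proposition from \cite{SzpojanWesol} and then proves an unrelated recursion (Lemma~\ref{lem_cum}) for cumulants of the form $\mathcal{R}_n(\V^{-1},\V,\ldots,\V)$. So as written your proposal has a genuine gap at the freeness step, and for a complete argument you would either need to execute the combinatorial matching in full or consult \cite{SzpojanWesol} for the original proof.
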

If we additionally assume, that support of $\Y$ does not contain $0$, then von Neumann algebra generated by $\Y$ and $\I$ contains also $\Y^{-1}$ and $\Y^{-2}$, from which it follows, that if $\X$ and $\Y$ are free, so are $\X$ and $\Y^{-1}$ and also $\X$ and $\Y^{-2}$. \\
Now we can state the following observation.
\begin{proposition}
\label{coro}
Let $\U$ and $\V$ be freely independent random variables in a $W^*$-probability space. Assume that $\V$ is free-Poisson with parameters $\theta+\sigma>1$ and $\alpha$ and $\U$ is free-binomial with parameters $\sigma$ and $\theta>1$. Then
$$
\varphi\left(\left.\V-\V^{\frac{1}{2}}\,\U\,\V^{\frac{1}{2}}\right|\V^{\frac{1}{2}}\,\U\,\V^{\frac{1}{2}}\right)=\theta\alpha\,\I,
$$
$$
\varphi\left(\left.\left(\V-\V^{\frac{1}{2}}\,\U\,\V^{\frac{1}{2}}\right)^{-1}\right|\V^{\frac{1}{2}}\,\U\,\V^{\frac{1}{2}}\right)=\frac{1}{\alpha(\theta-1)}\,\I,
$$
$$
\varphi\left(\left.\left(\V-\V^{\frac{1}{2}}\,\U\,\V^{\frac{1}{2}}\right)^{-2}\right|\V^{\frac{1}{2}}\,\U\,\V^{\frac{1}{2}}\right)=\frac{\theta}{\alpha^2(\theta-1)^3}\,\I.
$$
\end{proposition}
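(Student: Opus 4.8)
The plan is to reduce all three identities, via Proposition \ref{free_u_v} and the constancy of conditional expectations for free variables (Lemma \ref{conexp}), to the computation of one ordinary and two negative moments of a single free-Poisson variable. Write $\X=\V^{1/2}\,\U\,\V^{1/2}$ and $\Y=\V-\V^{1/2}\,\U\,\V^{1/2}=\V^{1/2}(\I-\U)\,\V^{1/2}$. Since $\I-\U$ is free-binomial and $\V$ is free-Poisson with rate $\theta+\sigma$ and jump size $\alpha$, Proposition \ref{free_u_v} shows that $\X$ and $\Y$ are freely independent and that $\Y$ is free-Poisson with parameters $(\theta,\alpha)$.

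Next I would exploit the assumption $\theta>1$. By \eqref{MPdist} the distribution $\nu(\theta,\alpha)$ of $\Y$ has no atom at $0$ and is supported in $[\alpha(1-\sqrt\theta)^2,\,\alpha(1+\sqrt\theta)^2]$ with $\alpha(1-\sqrt\theta)^2>0$; hence $\Y$ is invertible in $\mathcal{A}$ with bounded inverse, and $\Y^{-1},\Y^{-2}$ belong to the von Neumann algebra generated by $\Y$ and $\I$. As recorded in the remark preceding Proposition \ref{coro}, $\X$ is then free not only of $\Y$ but also of $\Y^{-1}$ and of $\Y^{-2}$. Applying \eqref{ce2} to the free pairs $(\Y,\X)$, $(\Y^{-1},\X)$, $(\Y^{-2},\X)$ therefore reduces the proposition to the scalar identities
$$
\varphi(\Y)=\theta\alpha,\qquad \varphi(\Y^{-1})=\frac{1}{\alpha(\theta-1)},\qquad \varphi(\Y^{-2})=\frac{\theta}{\alpha^2(\theta-1)^3},
$$
where $\Y$ is free-Poisson with parameters $(\theta,\alpha)$.

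Finally I would verify these. The first is immediate, $\varphi(\Y)=\mathcal{R}_1(\Y)=\theta\alpha$. For the negative moments, since $0$ lies outside $\supp\nu(\theta,\alpha)$ the Cauchy transform $G:=G_{\nu(\theta,\alpha)}$ is analytic at $0$ and, by the spectral theorem, $\varphi(\Y^{-1})=-G(0)$ and $\varphi(\Y^{-2})=-G'(0)$. Using \eqref{Crr} together with $r_{\nu(\theta,\alpha)}(z)=\tfrac{\theta\alpha}{1-\alpha z}$, set $F(z)=\tfrac{\theta\alpha}{1-\alpha z}+\tfrac1z$, so that $G(F(z))=z$; solving $F(z_0)=0$ yields $z_0=-\tfrac{1}{\alpha(\theta-1)}$ and hence $G(0)=z_0$, while differentiating $G(F(z))=z$ gives $G'(0)=1/F'(z_0)$ with $F'(z)=\tfrac{\theta\alpha^2}{(1-\alpha z)^2}-\tfrac1{z^2}$, so $F'(z_0)=-\tfrac{\alpha^2(\theta-1)^3}{\theta}$. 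Substituting gives the two remaining values. I expect the only point requiring real care to be the middle paragraph: one has to be sure that $\Y$ is genuinely invertible with inverse in the algebra generated by $\Y$ (so that freeness with $\X$ is inherited by $\Y^{-1}$ and $\Y^{-2}$), and that the point $z_0$ lies in the region where the functional equation \eqref{Crr} is valid — both following from $\theta>1$ and from the analyticity of the $r$- and Cauchy transforms recalled in Section 2. The rest is routine algebra.
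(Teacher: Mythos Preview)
Your approach is exactly the one the paper takes: invoke Proposition~\ref{free_u_v} to get freeness of $\X$ and $\Y$ with $\Y$ free-Poisson, then apply \eqref{ce2} so that each conditional expectation collapses to the corresponding (possibly negative) moment of $\Y$. The paper's proof stops at ``constants are equal to suitable moments of free Poisson random variable'' without computing them; your extra step of extracting $\varphi(\Y^{-1})$ and $\varphi(\Y^{-2})$ from the Cauchy transform via \eqref{Crr} is a welcome and correct addition that the paper leaves to the reader.
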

\begin{proof}
First let us notice that the assumptions about parameters of distributions ensure that all above moments exist. It follows from the fact that these distributions have compact support, $0$ is not in the support of $\V$, $1$ is not in the support of $\U$.\\
From freeness noticed in Proposition \ref{free_u_v} and Lemma \ref{conexp} \eqref{ce2} it follows that conditional expectations are constant times identity, where constants are equal to suitable moments of free Poisson random variable.
\end{proof}
In the proof of the second theorem in the next section, we will need to compute free cumulants of the type $\mathcal{R}_n\left(\X^{-1},\X,\ldots,\X\right)$. By the moment-cumulant formula \eqref{BLS}, we see that such cumulants can be expressed in terms of free cumulants of the random variable $\X$ and the cumulant $\mathcal{R}_1\left(\X^{-1}\right)=\varphi\left(\X^{-1}\right)$. Next lemma gives recurrence relation between cumulants $\mathcal{R}_n\left(\X^{-1},\X,\ldots,\X\right)$, and cumulants of variable $\X$.
\begin{lemma}
\label{lem_cum}
Let $\V$ be compactly supported, invertible non-commutative random variable. Define $C_n=\mathcal{R}_{n}\left(\V^{-1},\V,\ldots,\V\right)$ and $C(z)=\sum_{i=1}^{\infty}C_iz^{i-1}$. Then for $z$ in a neighbourhood of $0$ we have
\be 
\label{lem_1}
C(z)=\frac{z+C_1}{1+zr(z)},
\ee
where $r$ is  an $r$-transform of $\V$. In particular,
\be 
\label{lem_2}
C_2=1-C_1\mathcal{R}_1(\V),\ \ C_n=-\sum_{i=1}^{n-1}C_i\mathcal{R}_{n-i}(\V),\, \quad n\geq 2.
\ee
\end{lemma}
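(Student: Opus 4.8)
The plan is to compute the mixed moments $\varphi(\V^{-1}\V^{n-1})$ in two different ways and repackage the resulting identities into generating functions. On the one hand, apply formula \eqref{BLS} with $\X_1=\V^{-1}$ and $\X_2=\dots=\X_n=\V$: every factor $\varphi(\X_{i_j+1}\dots\X_{i_{j+1}-1})$ then involves only powers of $\V$, and every cumulant that occurs is $\mathcal{R}_k(\V^{-1},\V,\dots,\V)=C_k$, so that
$$\varphi\bigl(\V^{-1}\V^{n-1}\bigr)=\sum_{k=1}^{n}C_k\,[z^{n-k}]\,M_\V(z)^k ,$$
where $[z^m]$ denotes extraction of the coefficient of $z^m$ and $M_\V$ is the moment generating function \eqref{mgf}; here one uses that a choice $1=i_1<i_2<\dots<i_k\le n$ with $i_{k+1}=n+1$ corresponds bijectively to the gap sizes $d_j=i_{j+1}-i_j-1\ge 0$ with $\sum_j d_j=n-k$, so the inner sum over such choices is exactly $[z^{n-k}]M_\V(z)^k$. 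Multiplying by $z^n$, summing over $n\ge 1$ and swapping the two sums collapses the right-hand side to $\sum_{k\ge 1}C_k\bigl(zM_\V(z)\bigr)^k=zM_\V(z)\,C\bigl(zM_\V(z)\bigr)$. On the other hand $\varphi(\V^{-1}\V^{n-1})=\varphi(\V^{n-2})$ for $n\ge 2$, while $\varphi(\V^{-1}\V^{0})=\varphi(\V^{-1})=C_1$, so the same generating function equals $C_1z+z^{2}M_\V(z)$. Comparing and dividing by $z$ yields
$$zM_\V(z)\,C\bigl(zM_\V(z)\bigr)=C_1z+z^{2}M_\V(z),\qquad\text{hence}\qquad M_\V(z)\,C\bigl(zM_\V(z)\bigr)=C_1+zM_\V(z).$$

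Next I would invert the substitution $w=zM_\V(z)$. For this, recall the moment--free-cumulant generating-function identity $M_\V(z)=1+R_\V\bigl(zM_\V(z)\bigr)$, with $R_\V(z)=zr_\V(z)$; it is equivalent to \eqref{Crr} (combine \eqref{Crr} with the elementary relation $G_\nu(z)=z^{-1}M_\V(1/z)$, valid for $|z|>\|\V\|$, and set $z=1/\zeta$). Writing $w=zM_\V(z)$, this identity reads $M_\V(z)=1+R_\V(w)$, so $z=w/\bigl(1+R_\V(w)\bigr)$, and substituting into the last display gives
$$C(w)=\frac{C_1}{M_\V(z)}+z=\frac{C_1}{1+R_\V(w)}+\frac{w}{1+R_\V(w)}=\frac{w+C_1}{1+w\,r_\V(w)},$$
which is \eqref{lem_1} after renaming $w$ to $z$. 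Finally \eqref{lem_2} follows by clearing denominators, i.e. writing $C(z)\bigl(1+zr(z)\bigr)=z+C_1$ with $1+zr(z)=1+\sum_{n\ge1}\mathcal{R}_n(\V)z^n$, and comparing coefficients of $z^0$ (automatic), $z^1$ (giving $C_2=1-C_1\mathcal{R}_1(\V)$) and $z^{n-1}$ for $n\ge 3$ (giving $C_n=-\sum_{i=1}^{n-1}C_i\mathcal{R}_{n-i}(\V)$).

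The point demanding genuine care is the justification that every series involved converges in a neighbourhood of $0$ and that the substitutions are legitimate analytic changes of variable: since $\V$ is bounded and invertible, $M_\V$ and $r_\V$ are analytic near $0$, and the mixed cumulants $C_n=\mathcal{R}_n(\V^{-1},\V,\dots,\V)$ grow at most geometrically (being mixed cumulants of bounded operators, via the Möbius inversion of the moment--cumulant formula), so $C$ has positive radius of convergence and both $w\mapsto zM_\V(z)$ and $w\mapsto w/(1+R_\V(w))$ are analytic maps fixing $0$ with nonzero derivative there. Apart from this, the argument is a routine generating-function computation; the only mildly delicate combinatorial input is the identification of the inner sum in the \eqref{BLS} expansion with $[z^{n-k}]M_\V(z)^k$, together with correctly isolating the $n=1$ boundary term $\varphi(\V^{-1})=C_1$.
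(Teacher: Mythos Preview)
Your argument is correct and follows essentially the same route as the paper: both apply \eqref{BLS} with first entry $\V^{-1}$ and remaining entries $\V$, assemble the resulting moment identities into a generating-function equation $M_\V(z)\,C(zM_\V(z))=C_1+zM_\V(z)$, and then invert the substitution $w=zM_\V(z)$ via the $r$-transform. The only cosmetic difference is that the paper carries out the inversion through the Cauchy transform and \eqref{Crr}, whereas you invoke the equivalent identity $M_\V(z)=1+R_\V(zM_\V(z))$ directly.
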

\begin{proof}
It is evident that equations \eqref{lem_1} and \eqref{lem_2} are equivalent and we will proof only \eqref{lem_1}.
First, we observe that from the moment-cumulant formula \eqref{BLS} it follows that
\begin{align*}
\varphi\left(\V^n\right)=\varphi\left(\V^{-1}\V^{n+1}\right)=C_1\varphi\left(\V^{n+1}\right)+
\sum_{k=2}^{n+2}C_k\sum_{i_1+\ldots+i_k=n+2-k}\varphi\left(\V^{i_1}\right)\ldots\varphi\left(\V^{i_k}\right).
\end{align*}
Now we multiply left and right hand sides of the above equation by $z^n$ and sum over $n$ from $0$ to $\infty$, which results in
\small
\begin{align*}
M(z)&=\frac{C_1}{z}\left(M(z)-1\right)+\frac{1}{z}\sum_{n=0}^{\infty}
\sum_{k=2}^{n+2}C_kz^{k-1}\sum_{i_1+\ldots+i_k=n+2-k}\varphi\left(\V^{i_1}\right)z^{i_1}\ldots\varphi\left(\V^{i_k}\right)z^{i_k}\\
&=\frac{C_1}{z}\left(M(z)-1\right)+\frac{1}{z}\sum_{k=2}^{\infty}C_kz^{k-1}\sum_{n=k-2}^{\infty}\,\sum_{i_1+\ldots+i_k=n+2-k}\varphi\left(\V^{i_1}\right)z^{i_1}\ldots\varphi\left(\V^{i_k}\right)z^{i_k}\\
&=\frac{C_1}{z}\left(M(z)-1\right)+\frac{1}{z}\sum_{k=2}^{\infty}C_kz^{k-1}M^k(z)=\frac{C_1}{z}\left(M(z)-1\right)+\frac{1}{z}M(z)\left(C(zM(z))-C_1\right).
\end{align*}
\normalsize
We finally get
$$
zM(z)=M(z)C(zM(z))-C_1.
$$
Taking into account that $zM(z)=G\left(\frac{1}{z}\right)$ we obtain
\begin{align*}
G\left(\frac{1}{z}\right)&=\frac{1}{z}G\left(\frac{1}{z}\right)C\left(G\left(\frac{1}{z}\right)\right)-C_1,
\end{align*}
and thus
\begin{align*}
G(z)&=zG(z)C\left(G(z)\right)-C_1.
\end{align*}
From \eqref{Crr} we see that
\begin{align*}
z+C_1&=\left(r(z)+\frac{1}{z}\right)zC(z),
\end{align*}
and thus
\begin{align*}
C(z)&=\frac{z+C_1}{1+zr(z)}.
\end{align*}
\end{proof}
\section{Main results}
In this section we give two characterizations of free Poisson and free binomial distributions. The technique used in the proofs develops method from \cite{SzpojanWesol}. These two theorems below complete free probability analogues of classical results from \cite{BobWes2002Dual}.
\begin{theorem}
\label{th1}
Let $(\mathcal{A},\,\varphi)$ be a $W^*$-probability space and let $\U,\,\V$ be non-commutative variables in $(\mathcal{A},\,\varphi)$ which are freely independent, $\V$ has a distribution compactly supported in $(0,\infty)$ and  distribution of $\U$ is supported on $[0,1-\delta)$, for some $\delta\in(0,1)$. Assume that there exist real constants $c$ and $d$ such that
\be\label{reg1}
\varphi\left(\left.\V^{\frac{1}{2}}\left(\I-\U\right)\,\V^{\frac{1}{2}}\right|\V^{\frac{1}{2}}\,\U\,\V^{\frac{1}{2}}\right)=c\,\I
\ee
and
\be\label{reg2}
\varphi\left(\left.\V^{-\frac{1}{2}}\left(\I-\U\right)^{-1}\,\V^{-\frac{1}{2}}\right|\V^{\frac{1}{2}}\,\U\,\V^{\frac{1}{2}}\right)=d\,\I.
\ee
Then there exists $F>1$ such that $\V$ has free-Poisson distribution, $\nu(\lambda,\alpha)$ with $\lambda=\sigma+\theta$, $\left(\sigma=\frac{F-1}{cd-1},\theta=1+\frac{1}{c d-1}\right)$, $\alpha=\frac{c d-1}{d}$ and $\U$ has free-binomial distribution,  $\beta(\sigma,\theta)$.
\end{theorem}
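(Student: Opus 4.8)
The plan is as follows. Note that $\I-\U$ is free from $\V$ with spectrum in $(\delta,1]$, so $(\I-\U)^{-1}$ is bounded, and since $\V$ is bounded and bounded away from $0$, so is $\V^{-1}$. Put $\X=\V^{1/2}\U\V^{1/2}$ and $\Y=\V-\X=\V^{1/2}(\I-\U)\V^{1/2}$; then $\Y^{-1}=\V^{-1/2}(\I-\U)^{-1}\V^{-1/2}$ exists, so \eqref{reg1} reads $\varphi(\Y\mid\X)=c\,\I$ and \eqref{reg2} reads $\varphi(\Y^{-1}\mid\X)=d\,\I$. First I would apply Lemma~\ref{conexp}\,\eqref{ce1} with $\Z=(\I-z\X)^{-1}$, $z$ in a neighbourhood of $0$, to turn the two regressions into
\[
\varphi\!\left(\Y(\I-z\X)^{-1}\right)=c\,M(z),\qquad \varphi\!\left(\Y^{-1}(\I-z\X)^{-1}\right)=d\,M(z),
\]
where $M(z)=\varphi\big((\I-z\X)^{-1}\big)=\sum_{n\ge0}\varphi((\U\V)^n)\,z^n$. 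Taking $\varphi$ of the two conditions gives $\varphi(\Y)=c$, $\varphi(\Y^{-1})=d$, and the Cauchy--Schwarz inequality for $\varphi$ applied to $\Y^{1/2},\Y^{-1/2}$ gives $1=\varphi\big(\Y^{1/2}\Y^{-1/2}\big)^{2}\le\varphi(\Y)\varphi(\Y^{-1})=cd$, with equality only in degenerate cases; so $cd>1$, which is needed for the parameters below to make sense.

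Next I would re-express the two left-hand sides through the free pair $\U,\V$. Using traciality, $\varphi(\Y\X^n)$ and $\varphi(\Y^{-1}\X^n)$ turn into moments of $\U$ and $\V$ — for the latter one also rewrites the factor $(\I-\U)^{-1}\U$ that appears as $(\I-\U)^{-1}-\I$. Summing in $z$ turns the two identities above into
\[
\varphi\!\left(\V(\I-z\V\U)^{-1}\right)=\frac{(cz+1)M(z)-1}{z},\qquad \varphi\!\left((\I-\U)^{-1}(\I-z\V\U)^{-1}\right)=\frac{(d+z)M(z)-d}{z},
\]
where by traciality $M(z)=\varphi\big((\I-z\V\U)^{-1}\big)$ as before. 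Since $\U$ is free from $\V$, I would then close the system by freeness: coefficient by coefficient via the moment--cumulant formula \eqref{BLS} and the vanishing of mixed cumulants, or, equivalently, through the multiplicativity $S_{\U\V}=S_\U S_\V$ of the $S$-transform and the associated subordination functions. This should express both weighted resolvent moments, and $M$ itself, in terms of the $r$-transforms of $\U$ and $\V$; substituting and eliminating the auxiliary functions should leave a single functional equation, valid near $0$, relating $r_\V$ (equivalently $G_\V$), the transform of $\U$, and $c,d$.

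It then remains to solve that equation. Matching low-order Taylor coefficients should fix $\varphi(\V)$, $\varphi(\U)$ and the jump size $\alpha=(cd-1)/d$; pushing the expansion further should force $r_\V$ to be of free-Poisson type, $r_\V(z)=\lambda\alpha/(1-\alpha z)$, i.e.\ $\V\sim\nu(\lambda,\alpha)$, after which the equation should determine $G_\U$ to be of the form \eqref{betaC}, i.e.\ $\U$ free-binomial $\beta(\sigma,\theta)$ with $\sigma+\theta=\lambda$. From $cd>1$ one gets $\theta=1+1/(cd-1)$; writing the remaining degree of freedom as $F:=(\lambda-1)/(\theta-1)=1+\sigma/(\theta-1)$ one has $\sigma=(F-1)/(cd-1)$, and the requirement that $\U$ be supported in $[0,1-\delta)$, i.e.\ $\sigma>0$, forces $F>1$. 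That every such parameter choice does satisfy \eqref{reg1}--\eqref{reg2} is Proposition~\ref{coro}, so no constraint is lost and the characterization is sharp.

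The main obstacle should be the passage from the two displayed identities to a single solvable functional equation. The polynomial regression \eqref{reg1} carries only ordinary-moment information and can be handled as in \cite{SzpojanWesol}; the genuinely new ingredient is the inverse regression \eqref{reg2}, and the point is to bring $\varphi\big(\Y^{-1}(\I-z\X)^{-1}\big)$ into a form compatible with the free-multiplicative-convolution machinery and to carry out the elimination cleanly enough that one tractable equation survives. Once it is available, the rest is a long but essentially routine computation with power series and transforms, together with an analyticity check near $0$ ensuring that the solution is unique.
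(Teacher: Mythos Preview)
Your outline follows the same route as the paper: turn the two regressions into generating-function identities for $M(z)=M_{\U\V}(z)$, use freeness of $\U,\V$ via the moment--cumulant formula to close the system, solve for $r_\V$, and then recover the law of $\U$ through $S$-transforms. Your two displayed resolvent identities are correct and coincide (after expanding in powers of $z$) with the paper's equations \eqref{rew1}--\eqref{rew2}; the Cauchy--Schwarz argument for $cd>1$ also matches.

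The one place where your sketch is too thin is exactly the step you flag as ``the main obstacle'': closing the system for the inverse regression. You propose to handle $\varphi\big((\I-\U)^{-1}(\I-z\V\U)^{-1}\big)$ ``via \eqref{BLS} \ldots\ or, equivalently, through $S_{\U\V}=S_\U S_\V$ and subordination''. The $S$-transform / subordination route does not apply directly here, because this is not a moment of $\U\V$ but a \emph{weighted} mixed moment with an extra factor $(\I-\U)^{-1}$; multiplicativity of $S$ gives you $M$ itself, not this object. What the paper actually does is expand $(\I-\U)^{-1}=\sum_{i\ge 0}\U^i$, set $G_i(z)=\sum_{n\ge 0}\varphi(\U^i(\V\U)^n)z^n$, and use \eqref{BLS} to derive the recurrence $G_i(z)-\varphi(\U^i)=z\,r_\V(zD(z))\,G_{i+1}(z)$, where $D(z)=\sum_n\varphi(\U(\V\U)^n)z^n$. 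Summing over $i$ is what makes the constant $F:=\varphi((\I-\U)^{-1})$ appear \emph{inside} the functional equation, yielding \eqref{fin_form}. In other words, $F$ is not a post-hoc reparametrization of ``the remaining degree of freedom'' as you write; it enters the analysis intrinsically at this summation step, and the system then determines $r_\V$ and $S_\U$ explicitly in terms of $c,d,F$ (equations \eqref{hDel2}--\eqref{r2}), rather than by ``matching low-order Taylor coefficients''. Once you supply this recurrence-and-sum argument, the rest of your plan goes through exactly as in the paper.
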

\begin{proof}
By functional calculus in von Neumann algebras (see \cite{Takesaki}) $\V$ and $\I-\U$ are invertible since we assumed that the distributions of $\V$ has no atom at $0$ and the distribution of $\U$ is separated from $1$. Recall that in von Neumann algebras, spectral norm of normal element is equal to its norm. Since spectrum of $\U$ is by assumption contained in $[0,1-\delta)$, we have $\|\U\|<1$, and we can write $(\I-\U)^{-1}=\sum_{i=0}^{\infty}\U^i$. Since $\V$ is positive, so $\V^{-1}$ is also positive, and $\V^{-\frac{1}{2}}$ is well defined. The variables $\V^{-1}$ and $\V^{-\frac{1}{2}}$ belong to the von Neumann algebra generated by $\{\I,\V\}$, so are free with $\U$.
\\
We can multiply both sides of \eqref{reg1} and \eqref{reg2}, by $\left(\V^{\frac{1}{2}}\,\U\,\V^{\frac{1}{2}}\right)^n$ and by properties of conditional expectation in von Neumann algebras (Lemma \ref{conexp}) we get
\be\label{var1}
\varphi(\V(\V\U)^n)-\varphi((\V\U)^{n+1})=c\,\varphi((\V\U)^n)
\ee
and 
\be\label{var2}
\varphi\left(\left(\sum_{i=1}^\infty\U^i\right)\left(\V\U\right)^{n-1}\right)=d\,\varphi((\V\U)^n),
\ee
where \eqref{var1} is true for $n\geq 0$, and \eqref{var2} for $n\geq 1$.
Since the series $\sum_{i=1}^\infty\U^i$ converges absolutely, the second equation can be rewritten as 
\be
\sum_{i=1}^\infty\varphi\left(\U^i\left(\V\U\right)^{n-1}\right)=d\,\varphi((\V\U)^n).
\ee
Now we define sequences $(\alpha_n)_{n\ge 0}$, $(\beta_n)_{n\ge 0}$ and $(\gamma_{i,n})_{i,n\ge 0}$ as follows
\begin{align*}
\alpha_n=\varphi((\V\U)^n),\quad \beta_n=\varphi(\V(\V\U)^n),\quad\mbox{and}\quad \gamma_{i,n}=\varphi(\U^i(\V\U)^n),\quad i,n=0,1,\ldots.
\end{align*}
Then equations \eqref{var1} and \eqref{var2} have the form
\be\label{rew1}
\beta_n-\alpha_{n+1}=c\,\alpha_n
\ee
and
\be\label{rew2}
\sum_{i=1}^\infty\gamma_{i,n-1}=d\,\alpha_n.
\ee
Multiplying both sides of first equation by $z^n$ for $n\geq 0$ and taking sum over $n$ we obtain,
\begin{align}
\label{eq_first}
B(z)-\frac{1}{z}(A(z)-1)=c\,A(z),
\end{align}
where 
$$
M_{\U\V}(z)=A(z)=\sum_{n=0}^{\infty}\,\alpha_n\,z^n,\qquad B(z)=\sum_{n=0}^{\infty}\,\beta_n\,z^n.\qquad 
$$
From \cite{SzpojanWesol} (eq. 29 and 30) we know that we can express functions $A$ and $B$ as  
\be
\label{ab}
A(z)=1+zD(z)r(zD(z)), \ \ 
B(z)=zD(z)r^2(zD(z))+r(zD(z)),
\ee
where $r$ is r-transform of $\V$ and $D$ is generating function of sequence $\delta_n=\varphi\left(\U(\V\U)^n\right)$, $n\geq 0$.

Now we proceed to find similar form for equation \eqref{rew2}.
We multiply both sides of this equation by $z^n$ and sum over $n\geq 1$, which, by changing order of the summation results in
\be
\label{G1}
z\sum_{i=1}^\infty G_i(z)=d(A(z)-1),
\ee
where
$$
G_i(z)=\sum_{n=0}^\infty z^n\gamma_{i,n}.
$$
Note that by the Cauchy-Schwarz inequality and assumptions about support of $\U$ we have \begin{align*}|G_i(z)|&\leq \sum_{n=0}^\infty |z|^n|\varphi\left(\U^{2i}\right)|^{\frac{1}{2}}|\varphi\left((\U\V)^n(\V\U)^n\right)|^{\frac{1}{2}}\leq \\ &\leq 
\varphi(\U^{2i})^{1/2}\sum_{n=0}^\infty|z|^n|\varphi\left((\U\V)^n(\V\U)^n\right)|^{\frac{1}{2}}
\leq (1-\delta)^i\sum_{n=0}^\infty|z|^n|\varphi\left((\U\V)^n(\V\U)^n\right)|^{\frac{1}{2}}.
\end{align*} Note that for some $C>0$ we have $\|(\V\U)^n(\U\V)^n\|\leq\|\V\U\|^{2n}\leq C^n$. Taking into account that the support of a random variable is contained in its spectrum, we conclude that the function series $\sum_{n=0}^\infty|z|^n|\varphi\left((\U\V)^n(\V\U)^n\right)|^{\frac{1}{2}}$ converges in a neighbourhood of $0$. Since $|1-\delta|<1$ the left hand side of \eqref{G1} is also well defined in a neighbourhood of $0$.\\
By moment-cumulant formula \eqref{BLS} and freeness of $\U$ and $\V$ we get
\begin{align*}
\gamma_{i,n}&=\Rr_1\varphi(\U(\U\V)^{n-1}\U^{i})
\\
&+\Rr_2[\varphi(\U)\varphi(\U(\U\V)^{n-2}\U^{i})+
\varphi(\U\V\U)\varphi(\U(\V\U)^{n-3}\U^{i})+\ldots+ \varphi(\U(\V\U)^{n-2})\varphi(\U\U^{i})]\\
&+\ldots+\Rr_n\varphi^{n-1}(\U)\varphi(\U\U^i).
\end{align*}
Where $\Rr_n=\Rr_n(\V)$. In terms of $(\delta_n)$ and $(\gamma_{i+1,n})$ we can expand $\gamma_{i,n}$ as,
\begin{align*}
\gamma_{i,n}&=\Rr_1\gamma_{i+1,n-1}+\Rr_2\left(\delta_0\gamma_{i+1,n-2}+\delta_1\gamma_{i+1,n-3}+\ldots+\delta_{n-2}\gamma_{i+1,0}\right)\\
&+\ldots+\Rr_n\delta_0^{n-1}\gamma_{i+1,0}.
\end{align*}
Thus, for $n\geq 1$ and $i\geq 0$, we have, 
$$
\gamma_{i,n}=\sum_{k=1}^n\Rr_k\sum_{j_1+\ldots+j_k=n-k}\gamma_{i+1,j_1}\delta_{j_2}\ldots\delta_{j_k}.
$$
Hence for the function $G_{i}$ we obtain,
\begin{align*}
G_{i}(z)-\gamma_{i,0}&=\sum_{n=1}^{\infty}z^n\gamma_{i,n}
=\sum_{n=1}^{\infty}\sum_{k=1}^n\Rr_kz^k\sum_{j_1+\ldots+j_k=n-k}\gamma_{i+1,j_1}z^{j_1}\delta_{j_2}z^{j_2}\ldots\delta_{j_k}z^{j_k}\\
&=\sum_{k=1}^{\infty}\Rr_kz^k\sum_{n=k}^\infty\sum_{j_1+\ldots+j_k=n-k}\gamma_{i+1,j_1}z^{j_1}\delta_{j_2}z^{j_2}\ldots\delta_{j_k}z^{j_k}\\
&=\sum_{k=1}^{\infty}\Rr_kz^k\sum_{m=0}^\infty\sum_{j_1+\ldots+j_k=m}\gamma_{i+1,j_1}z^{j_1}\delta_{j_2}z^{j_2}\ldots\delta_{j_k}z^{j_k}\\
&=\sum_{k=1}^{\infty}\Rr_kz^kG_{i+1}(z)D^{k-1}(z)=zG_{i+1}(z)r(zD(z)).
\end{align*}
We sum the left and right hand sides of the above equation with respect to $i=0,1,\ldots$, which gives
\begin{align}
\label{gammaf}
\sum_{i=0}^\infty(G_i(z)-\gamma_{i,0})=zr(zD(z))\sum_{i=0}^\infty G_{i+1}(z).
\end{align}
Now we define a function $\Gamma(z)=\sum_{i=0}^\infty G_i(z)$, and constant $F=\sum_{i=0}^\infty\gamma_{i,0}$.\\
For the constant $F$ we have $$F=\sum_{i=0}^\infty\gamma_{i,0}=\sum_{i=0}^\infty\varphi\left(\U^i\right)=
\varphi\left((\I-\U)^{-1}\right)\in(1,\infty).$$ 
Equation \eqref{gammaf} can be rewritten as
$$
\Gamma(z)-F=zr(zD(z))\left(\Gamma(z)-G_{0}(z)\right).
$$
Taking into account that $G_{0}(z)=\sum_{n=0}^\infty z^n\varphi\left((VU)^n\right)=A(z)$, we can write
\be
\label{Gam}
\Gamma(z)=\frac{zr(zD(z))A(z)-F}{zr(zD(z))-1}.
\ee
We rewrite the left hand side of \eqref{G1} in terms of $\Gamma(z)$ and $A(z)$ obtaining
$$
z\left(\Gamma(z)-A(z)\right)=d(A(z)-1).
$$
Now we substitute $\Gamma(z)$  in the above equation by the right hand side of
\eqref{Gam} which gives
\be
\label{fin_form}
z\left(\frac{A(z)-F}{zr(zD(z))-1}\right)=d(A(z)-1).
\ee
After defining auxiliary function $h(z)=zD(z)r(zD(z))$ and using \eqref{ab}, we can rewrite the above equation and \eqref{eq_first} as a system
\begin{align}
\label{hDel2}
zD(z)\frac{h(z)+1-F}{h(z)-D(z)}&=dh(z),\\
\frac{h^2(z)+h(z)}{zD(z)}=\frac{h(z)}{z}&+c(h(z)+1).
\nonumber
\end{align}
We transform the above equations in order to have at the left hand sides only $h^2(z)$ and we compare right hand sides of these equations. After cancellations we arrive at 
\begin{align}
\label{hD2}
\frac{h(z)}{zD(z)}=\frac{\frac{F-1}{d}+c}{1-zD(z)(c-\frac{1}{d})}.
\end{align}
Recall that $h(z)=zD(z)r(zD(z))$, $\lim_{z\to 0}zD(z)=0$, and $r$ is analytic at $z=0$. Therefore we have
\begin{align}
\label{r2}
r(z)=\frac{\frac{F-1}{d}+c}{1-z(c-\frac{1}{d})}=\frac{\lambda\alpha}{1-z\alpha}.
\end{align}
It means that $\V$ has a free-Poisson distribution with parameters $\alpha=c-\frac{1}{d}$ and $\lambda=1+\frac{F}{cd-1}$. Note that by the Cauchy-Schwarz inequality $cd>1$ which implies $\alpha>0$ and $\lambda>1$; meaning that distribution of $\V$ does not have an atom at $0$.

It remains to prove that $\U$ has free binomial distribution. We will do this using $S$-transforms. Note that we can rewrite equation \eqref{hDel2} as
\begin{align*}
D(z)\left(z(h(z)+1-F)+\frac{F}{\alpha(\lambda-1)}h(z)\right)=h^2(z)\frac{F}{\alpha(\lambda-1)}.
\end{align*}
In the above equation we substitute $D(z)=\frac{h(z)}{\lambda\alpha z+\alpha z h(z)}$ which is equivalent to \eqref{hD2}. After canceling $h(z)$, which is allowed in a neighbourhood of $0$, we get
\begin{align*}
\frac{F}{\lambda-1}zh^2(z)+h(z)\left\{\left(\frac{\lambda F}{\lambda-1}-1\right)z-\frac{F}{\alpha(\lambda-1)}\right\}-z(1-F)=0.
\end{align*}
Recall that $h=M_{\U\V}$. We define $\Psi_{\U\V}=h^{-1}$ and we rewrite the above equation as
\begin{align*}
\frac{F}{\lambda-1}\Psi_{\U\V}(z)z^2+z\left\{\left(\frac{\lambda F}{\lambda-1}-1\right)\Psi_{\U\V}(z)-\frac{F}{\alpha(\lambda-1)}\right\}-\Psi_{\U\V}(z)(1-F)=0.
\end{align*}
Now we can use equation \eqref{MSr} to find $S$-transform of $\U\V$,
\begin{align*}
S_{\U\V}(z)=\frac{F}{\alpha\{(1-\lambda)(1-F)+zF\}}.
\end{align*} 
Since we know the distribution of $\V$ and $A=M_{\V\U}+1$, where $M_{\V\U}$ is moment generating function of $\V\U$, we can find the distribution of $\U$ by calculating S-transforms and using \eqref{Scon}.\\
Since the $S$-transform of $\V$ (being free-Poisson) is
\begin{align*}
S_\V(z)=\frac{1}{\alpha\lambda+\alpha z},
\end{align*}
by \eqref{Scon} we get
\begin{align*}
S_\U=1+\frac{F+\lambda-1}{(1-\lambda)(1-F)+Fz}.
\end{align*}
Now we use \eqref{Str} and \eqref{MSr} to find the corresponding Cauchy transform which appears to be
$$G_{\U}(z)=\frac{(\sigma+\theta-2)z+1-\sigma-\sqrt{[(\sigma+\theta-2)z+1-\sigma]^2-4(1-\sigma-\theta)z(z-1)}}{2z(1-z)},$$
where $\sigma =(\lambda-1)(1-\frac{1}{F})=\frac{F-1}{cd-1}, \ \ \theta =1+\frac{\lambda-1}{F}=\frac{cd}{cd-1}$. This is the Cauchy transform of the free binomial distribution with parameters $\sigma,\theta$; see \eqref{betaC}. By the Cauchy-Schwarz inequality $cd\geq 1$, so, $\theta\geq 1$, which means that the distribution of $\U$ does not have an atom at 1. It can be also checked by direct calculation that in this case $x_+$ from \eqref{binom_supp} is equal to $\frac{1}{(F+c d-1)^2}\left(\sqrt{cd(cd-1)}+\sqrt{F(F-1)}\right)^2$, which means that the continuous part of the support of distribution of $\U$ is separated from $1$.
\end{proof}

\begin{theorem}
Let $(\mathcal{A},\,\varphi)$ be a $W^*$-probability space and $\U,\,\V$ be non-commutative variables in $(\mathcal{A},\,\varphi)$ which are freely independent, $\V$ has a distribution compactly supported in $(0,\infty)$ and  distribution of $\U$ is supported on $[0,1-\delta)$ for some $\delta\in(0,1)$. Assume that there exist real constants $c$ and $d$ such that
\be\label{reg21}
\varphi\left(\left.\V^{-\frac{1}{2}}\left(\I-\U\right)^{-1}\,\V^{-\frac{1}{2}}\right|\V^{\frac{1}{2}}\,\U\,\V^{\frac{1}{2}}\right)=c\,\I
\ee
and
\be\label{reg22}
\varphi\left(\left.\V^{-\frac{1}{2}}\left(\I-\U\right)^{-1}\,\V^{-1}\left(\I-\U\right)^{-1}\,\V^{-\frac{1}{2}}\right|\V^{\frac{1}{2}}\,\U\,\V^{\frac{1}{2}}\right)=d\,\I.
\ee
Then there exists $F>1$, such that $\V$ has a free-Poisson distribution, $\nu(\lambda,\alpha)$ with $\lambda=\sigma+\theta$, $\left(\sigma=\frac{c^2 (F-1)}{d-c^2},\theta=\frac{d}{d-c^2}>1\right)$, $\alpha=\frac{d-c^2}{c^3}>0$ and $\U$ has a free-binomial distribution,  $\beta(\sigma,\theta)$.
\end{theorem}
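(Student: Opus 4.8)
The plan is to reuse the machinery of the proof of Theorem~\ref{th1} almost verbatim; the one genuinely new point is that the operator appearing in \eqref{reg22} carries a factor $\V^{-1}$ sandwiched between two copies of $(\I-\U)^{-1}$, so expanding the corresponding moments forces us to bring in the cumulants $C_k=\mathcal{R}_k(\V^{-1},\V,\dots,\V)$ governed by Lemma~\ref{lem_cum}. The opening is identical to that of Theorem~\ref{th1}: functional calculus makes $\V$ and $\I-\U$ invertible, $\|\U\|<1$ gives $(\I-\U)^{-1}=\sum_{i\ge0}\U^i$ in norm, the elements $\V^{-1},\V^{-1/2}$ lie in the von Neumann algebra generated by $\{\I,\V\}$ and hence are free from $\U$, and compactness of supports together with the separation of $\U$ from $1$ make every moment below finite.

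Next I would dispose of the first regression \eqref{reg21}. Multiplying it by $(\V^{1/2}\U\V^{1/2})^n$, using Lemma~\ref{conexp} and traciality, and writing $\V^{-1/2}(\V^{1/2}\U\V^{1/2})^n\V^{-1/2}=\U(\V\U)^{n-1}$, one obtains $\sum_{i\ge1}\varphi(\U^i(\V\U)^{n-1})=c\,\varphi((\V\U)^n)$ for $n\ge1$, which is literally equation \eqref{rew2} of Theorem~\ref{th1} with $d$ replaced by $c$. Hence the whole derivation of \eqref{Gam}--\eqref{fin_form} carries over unchanged and gives $z\,(A(z)-F)/(z\,r(zD(z))-1)=c\,(A(z)-1)$, with $A,D,r,G_i,\Gamma$ exactly as there and $F=\sum_{i\ge0}\varphi(\U^i)=\varphi((\I-\U)^{-1})\in(1,\infty)$. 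Applying $\varphi$ to \eqref{reg21} and using freeness of $(\I-\U)^{-1}$ and $\V^{-1}$ also gives $c=F\,\varphi(\V^{-1})$, so the constant $C_1=\varphi(\V^{-1})$ entering Lemma~\ref{lem_cum} is already pinned down: $C_1=c/F$.

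The heart of the matter is the second regression \eqref{reg22}. Multiplying by $(\V^{1/2}\U\V^{1/2})^n$, applying Lemma~\ref{conexp}, traciality, and the expansions $(\I-\U)^{-1}=\sum_{j\ge0}\U^j$ and $(\I-\U)^{-1}\U=\sum_{i\ge1}\U^i$, one gets, for $n\ge1$, $\sum_{j\ge0}\sum_{i\ge1}\varphi(\V^{-1}\U^i(\V\U)^{n-1}\U^j)=d\,\varphi((\V\U)^n)$. I would then expand each $\varphi(\V^{-1}\U^i(\V\U)^{n-1}\U^j)$ by the moment-cumulant formula \eqref{BLS} with $\V^{-1}$ taken as the first variable: since $\U$ is free from the von Neumann algebra generated by $\V$, the only surviving cumulant carrying the $\V^{-1}$ is $\mathcal{R}_k(\V^{-1},\V,\dots,\V)=C_k$, every other cumulant is a pure $\mathcal{R}_k(\V)$ or $\mathcal{R}_k(\U)$, and --- exactly as in the computation of $\gamma_{i,n}$ in the proof of Theorem~\ref{th1} --- the $\U$-blocks reassemble into $\delta_n=\varphi(\U(\V\U)^n)$, i.e.\ into the generating function $D$. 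Summing over $n$, then over $i$ and $j$ in the same fashion as the passage from $\gamma_{i,n}$ to \eqref{Gam}, collapses everything into a second functional equation in $A$, $D$, $r(zD(z))$, $C(zD(z))$, $F$ and $d$, into which I would feed $C(w)=(w+C_1)/(1+w\,r(w))$ from Lemma~\ref{lem_cum} (with $C_1=c/F$) and $A=1+zD(z)\,r(zD(z))$ from \eqref{ab}. I expect this step --- keeping track of how the single $\V^{-1}$ spreads its cumulant $C_k$ over the $\V$'s of the word while the three indices $i,j,n$ are summed, and verifying as in Theorem~\ref{th1} that the relevant series converge near $0$ --- to be the main obstacle; everything after it is mechanical.

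Finally, setting $h(z)=zD(z)\,r(zD(z))$ as in Theorem~\ref{th1}, the two functional equations become a pair of algebraic relations among $h(z)$, $zD(z)$ and $z$ with coefficients built from $c,d,F$; eliminating $zD(z)$ (equivalently, isolating $h/(zD)=r(zD)$) and using analyticity of $r$ at $0$ together with $zD(z)\to0$ forces $r(z)=\lambda\alpha/(1-\alpha z)$, and comparing coefficients gives $\alpha=(d-c^2)/c^3$ and $\lambda=1+c^2F/(d-c^2)=\sigma+\theta$, so $\V$ is free-Poisson $\nu(\lambda,\alpha)$. Cauchy-Schwarz applied to the positive invertible element $W=\V^{-1/2}(\I-\U)^{-1}\V^{-1/2}$, for which $c=\varphi(W)>0$ and $d=\varphi(W^2)$, gives $d>c^2$, whence $\alpha>0$ and $\lambda>1$ (so $\V$ has no atom at $0$); as a consistency check one verifies $\varphi(\V^{-1})=1/(\alpha(\lambda-1))=c/F$. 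With $r$ now known, the functional equation becomes a quadratic in $h=M_{\U\V}$, hence a relation for $\Psi_{\U\V}=h^{-1}$; equation \eqref{MSr} yields $S_{\U\V}$ in closed form, \eqref{Scon} together with $S_\V(z)=1/(\alpha\lambda+\alpha z)$ gives $S_\U$, and \eqref{Str} and \eqref{MSr} convert it into a Cauchy transform of the form \eqref{betaC} with $\sigma=c^2(F-1)/(d-c^2)$ and $\theta=d/(d-c^2)$; since $c^2>0$ and $d-c^2>0$ we get $\theta>1$, so $\U$ has no atom at $1$, and a direct evaluation of $x_+$ from \eqref{binom_supp} shows the continuous part of the support of $\U$ is separated from $1$, which completes the proof.
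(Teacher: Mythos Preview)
Your outline is correct and follows the same route as the paper's proof. One tactical point worth flagging: the paper expands $\eta_{n,i,j}=\varphi\bigl((\V\U)^n\U^i\V^{-1}\U^j\bigr)$ via \eqref{BLS} with the \emph{leftmost $\V$} (not $\V^{-1}$) as the distinguished first variable, which produces a recursion $N_{i,j}\mapsto N_{i,j+1}$ that telescopes cleanly after summing in $j$ and brings in the extra constant $H=\varphi\bigl((\I-\U)^{-2}\bigr)$; this constant is then eliminated via the identity $C_1(H-F)=d(F-1)/c$, obtained by applying $\varphi$ to the $n=1$ instance of the two regressions. If you instead take $\V^{-1}$ as first variable, the $k=1$ contribution $C_1\gamma_{i+j,\,n-1}$ yields, after summing over $i\ge1$ and $j\ge0$, the weighted object $\sum_{m\ge1}m\,G_m(z)$, which is not directly in your announced list ``$A,D,r(zD),C(zD),F,d$''; it can, however, be reduced to $\Gamma,A,r(zD)$ and the same constant $H-F$ by one more application of the recursion $G_m-\gamma_{m,0}=z\,r(zD)\,G_{m+1}$ from the proof of Theorem~\ref{th1}. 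Either way one lands on the pair \eqref{r31}--\eqref{r32}, and everything in your sketch from that point on matches the paper verbatim.
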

\begin{proof}
We proceed with equation \eqref{reg21} similarly as in the proof of Theorem \ref{th1} and we obtain \eqref{fin_form} with $d$ replaced by $c$.
With equation \eqref{reg22} we proceed in a similar manner. First, we multiply both sides by $\left(\V^{\frac{1}{2}}\U\V^{\frac{1}{2}}\right)^n$ and by the tracial property  of $\varphi$ we obtain for $n\geq 1$
\begin{align*}
\varphi\left((\I-\U)^{-1}\V^{-1}(\I-\U)^{-1}\U(\V\U)^{n-1}\right)=d\varphi\left((\V\U)^n\right).
\end{align*} 
We can rewrite the above equation for $n\geq 1$ as
\begin{align}
\label{var21}
\sum_{i=0}^\infty\sum_{j=1}^\infty\varphi\left(\U^{i}\V^{-1}\U^{j}(\V\U)^{n-1}\right)=d\varphi\left((\V\U)^n\right).
\end{align} 
Now we define a triple sequence $\eta_{n,i,j}=\varphi\left((\V\U)^{n}\U^{i}\V^{-1}\U^{j}\right)$ for $n,i,j\geq 0.$ For every $i,j\geq 0$ let us define $N_{i,j}(z)=\sum_{n=0}^\infty \eta_{i,j,n}z^n$.\\
Multiplying \eqref{var21} by $z^n$ and summing up with respect to $n$ from $1$ to $\infty$ we get
\begin{align}
\label{fineq21}
z\sum_{i=0}^\infty\sum_{j=1}^\infty N_{i,j}(z)=d(A(z)-1).
\end{align}

We proceed with the moment-cumulant formula \eqref{BLS} in order to express the left hand side of \eqref{fineq21} in terms of functions $D$ and $h$, where $D,h$ are defined exactly as in the proof of Theorem \ref{th1}.\\
First, let us note that for $n\geq 1$ we get
\begin{align*}
\eta_{n,i,j}=&\mathcal{R}_1\varphi\left((\V\U)^{n-1}\U^{i}\V^{-1}\U^{j+1}\right)\\
+&\mathcal{R}_2\left\{\varphi\left(\U\right)\varphi\left((\V\U)^{n-2}\U^{i}\V^{-1}\U^{j+1}\right)+\ldots+
\varphi\left(\U(\V\U)^{n-2}\right)\varphi\left(\U^{i}\V^{-1}\U^{j+1}\right)\right\}\\
+&\mathcal{R}_3\left\{\varphi^2(\U)\varphi\left((\V\U)^{n-3}\U^i\V^{-1}\U^{j+1}\right)+\ldots
\varphi\left(\U(\V\U)^{n-3}\right)\varphi(\U)\varphi\left(\U^{i}\V^{-1}\U^{j+1}\right)\right\}\\
+&\ldots
+\mathcal{R}_n\varphi^{n-1}(\U)\varphi\left(\U^{i}\V^{-1}\U^{j+1}\right)\\
+&\varphi\left(\U^j\right)\left\{  \right.& \\
&C_2\varphi\left((\V\U)^{n-1}U^{i+1}\right)\\
+&C_3\left\{\varphi(\U)\varphi\left((\V\U)^{n-2}\U^{i+1}\right)+\ldots+
\varphi\left(\U(\V\U)^{n-2}\right)\varphi\left(\U^{i+1}\right)\right\}\\
+&\ldots
+C_{n+1}\varphi^{n-1}(\U)\varphi\left(\U^{i+1}\right)
\left.\right\},
\end{align*}
where $C_i=R_i(\V^{-1},\V,\ldots,\V)$ as in Lemma \ref{lem_cum}.
The above expression can be rewritten as 
\begin{align}
\label{etarek}
\eta_{n,i,j}=&\sum_{l=1}^{n}\mathcal{R}_i\sum_{k_1+\ldots+k_l=n-l}\eta_{k_1,i,j+1}
\delta_{k_2}\ldots\delta_{k_l}\\\nonumber
+&\varphi(\U^j)\sum_{l=2}^{n+1}C_l\sum_{k_1+\ldots+k_{l-1}=n-(l-1)}\gamma_{k_1,i+1}
\delta_{k_2}\ldots\delta_{k_{l-1}},
\end{align}
where  $\gamma_{i,n}=\varphi(\U^i(\V\U)^n),\quad i,n=0,1,\ldots$, are defined as in the proof of Theorem \ref{th1}.
For $i,j\geq 0$ we also have $\eta_{0,i,j}=\varphi\left(\U^i\V^{-1}\U^j\right)=C_1\varphi\left(\U^{i+j}\right)$.
Using equation \eqref{etarek} we are able to establish recurrence relation between functions $N_{i,j}$, $N_{i,j+1}$ and $G_{i+1}(z)$, where $G_{i}(z)=\sum_{n=0}^{\infty}\gamma_{i,n}z^n$, as follows
\begin{align*}
N_{i,j}(z)=&\sum_{n=0}^{\infty}z^n\eta_{n,i,j}\\=
&C_1\varphi\left(\U^{i+j}\right)+
\sum_{n=1}^{\infty}\sum_{l=1}^{n}z^n\mathcal{R}_l\sum_{k_1+\dots+k_l=n-l}\eta_{k_1,i,j+1}
\delta_{k_2}\dots\delta{k_l}\\
+&\varphi\left(\U^{j}\right)\sum_{n=1}^\infty\sum_{l=2}^{n+1}C_lz^n\sum_{k_1+\ldots+k_{l-1}=n-(l-1)}
\gamma_{k_1,i+1}\delta_{k_2}\ldots\delta_{k_{l-1}}\\
=&C_1\varphi\left(\U^{i+j}\right)+
\sum_{n=1}^{\infty}\sum_{l=1}^{n}\mathcal{R}_lz^{l}\sum_{k_1+\dots+k_l=n-l}\eta_{k_1,i,j+1}z^{k_1}
\delta_{k_2}z^{k_2}\dots\delta{k_l}z^{k_l}\\
+&\varphi\left(\U^{j}\right)\sum_{n=1}^\infty\sum_{l=2}^{n+1}C_lz^{l-1}\sum_{k_1+\ldots+k_{l-1}=n-(l-1)}
\gamma_{k_1,i+1}z^{k_1}\delta_{k_2}z^{k_2}\ldots\delta_{k_{l-1}}z^{k_{l-1}}\\
=&C_1\varphi(\U^{i+j})+zN_{i,j+1}(z)r(zD(z))+\varphi(\U^{j})\frac{G_{i+1}(z)}{D(z)}\left(C(zD(z))-C_1\right).
\end{align*}

We will compute the left hand side of \eqref{fineq21} using the above formula
\begin{align*}
\sum_{j=0}^\infty N_{i,j}(z)=&zr(zD(z))\left(\sum_{j=0}^\infty N_{i,j}(z)-N_{i,0}(z)\right)+\\+&\frac{G_{i+1}(z)}{D(z)}\left(C(zD(z))-C_1\right)
\sum_{j=0}^{\infty}\varphi\left(\U^j\right)+C_1\sum_{j=0}^\infty\varphi\left(\U^{i+j}\right).
\end{align*}
Taking sum over $i$ we obtain
\begin{align}
\label{sum_nij}
\left(1-zr(zD(z))\right)\sum_{i=0}^{\infty}\sum_{j=0}^\infty N_{i,j}(z)=&-z\mathcal{R}(zD(z))\sum_{i=0}^\infty N_{i,0}+\\+&\frac{\sum_{i=0}^\infty G_{i+1}(z)}{D(z)}\left(C(zD(z))-C_1\right)
\sum_{j=0}^{\infty}\varphi\left(\U^j\right)\nonumber+\\+&C_1\sum_{i=0}^\infty
\sum_{j=0}^\infty\varphi\left(\U^{i+j}\right).\nonumber
\end{align}
Since we have \begin{align*}|\eta_{n,i,j}|=&|\varphi\left((\V\U)^n\U^i\V^{-1}\U^j\right)|\leq\varphi\left(\U^{2i}V^{-1}\U^{2j}\V^{-1}\right)^{1/2}\varphi\left((\V\U)^n(\U\V)^n\right)^{1/2}
\\ \leq&
(1-\delta)^{i+j} C,
\end{align*} for some $C>0$, convergence of the left hand side of \eqref{sum_nij} for $z$ in neighbourhood of $0$ can be proved similarly as it was done for the function $\Gamma$ in the proof of Theorem \ref{th1}.

Note that for $n>0$ we have
\begin{align*}
\eta_{n,i,0}&=\varphi\left((\V\U)^n\U^i\V^{-1}\right)=
\varphi\left(\U(\V\U)^{n-1}\U^{i+1}\right)=\gamma_{n-1,i+1},
\end{align*}
moreover $\eta_{0,i,0}=\varphi\left(\U^i\V^{-1}\right)=C_1\varphi\left(\U^i\right)$.
\\
Using these relations we can express $N_{i,0}$ in terms of $G_{i+1}$ as
\begin{align*}
N_{i,0}(z)=\sum_{n=0}^\infty z^n\eta_{n,i,0}=\eta_{0,i,0}+\sum_{n=1}^\infty z^n\gamma_{n-1,i+1}=C_1\varphi\left(\U^i\right)+zG_{i+1}(z).
\end{align*}
Define now
\begin{align*}\Gamma(z)=\sum_{i=0}^\infty G_i(z), &\hspace{0.5cm} F=\sum_{i=0}^\infty\varphi\left(\U^i\right)=\varphi\left((\I-\U)^{-1}\right)<\infty,
\\ H=&\sum_{i=0}^\infty
\sum_{j=0}^\infty\varphi\left(\U^{i+j}\right)=\varphi\left((\I-\U)^{-2}\right)<\infty.
\end{align*}
Taking into account that $G_0(z)=A(z)$ we can rewrite 
\eqref{sum_nij} as 
\begin{align*}
\sum_{i=0}^{\infty}\sum_{j=0}^\infty N_{i,j}(z)=&\frac{1}{1-zr(zD(z))}\Big\{-zr(zD(z))\left(C_1F+z(\Gamma(z)-A(z))\right)+\\+&\frac{\Gamma(z)-A(z)}{D(z)}\left(C(zD(z))-C_1\right)F
+C_1H\Big\}.
\end{align*}
Note that the sum over $j$ at the left hand side of \eqref{fineq21} begins with $j=1$. By subtracting from both sides of above equation $\sum_{i=0}^\infty N_{i,0}(z)=C_1F+z(\Gamma(z)-A(z))$, we obtain
\begin{align*}
\sum_{i=0}^{\infty}\sum_{j=1}^\infty N_{i,j}(z)=&\frac{1}{1-zr(zD(z))}\Big\{-z(\Gamma(z)-A(z))+\\+&\frac{\Gamma(z)-A(z)}{D(z)}\left(C(zD(z))-C_1\right)F
+C_1(H-F)\Big\}.
\end{align*}
Finally, we conclude that initial equations (\eqref{reg21} and \eqref{reg22})
transforms into
\begin{align*}
z\left(\Gamma(z)-A(z)\right)&=c(A(z)-1)\\
\frac{z}{1-zr(zD(z))}\Big\{-z(\Gamma(z)-A(z))+\\+\frac{\Gamma(z)-A(z)}{D(z)}\left(C(zD(z))-C_1\right)F
+C_1(H-F)\Big\}&=d(A(z)-1).
\end{align*}
Multiplying both sides of equations \eqref{reg21} and \eqref{reg22} just by $\V^{\frac{1}{2}}\U\V^{\frac{1}{2}}$ and applying $\varphi$, it is easy to see that $C_1(H-F)=\frac{d(F-1)}{c}$. Putting this into the above system and simplifying the second equation using the first, we obtain
\begin{align*}
z\left(\Gamma(z)-A(z)\right)=c(A(z)-1)&\\
\frac{z}{1-zr(zD(z))}\Big\{-c(A(z)-1)+\frac{c(A(z)-1)}{zD(z)}\left(C(zD(z))-C_1\right)F
+\frac{d(F-1)}{c}\Big\}=&\\=d(A(z)-1)&.
\end{align*}
If we define a function $h$ by $h(z)=zD(z)r(zD(z))$ as in the previous poof, after simple transformations, using \eqref{ab} we can rewrite the above system of equations as
\begin{align}
\label{r31}
zD(z)( h(z)+1-F)&=ch(z)(h(z)-D(z)),\\
\label{r32}
zD(z)\left\{-ch(z)+\frac{cFh(z)}{zD(z)}
\frac{zD(z)-C_1h(z)}{1+h(z)}+\frac{d(F-1)}{c}\right\}&=dh(z)(D(z)-h(z)).
\end{align}
Now we compare the left hand sides of these equations. After canceling $zD(z)$ which is allowed in some neghbourhood of $0$, we obtain
\begin{align*}
-ch(z)+\frac{cFh(z)}{zD(z)}
\frac{zD(z)-C_1h(z)}{1+h(z)}+\frac{d(F-1)}{c}=-\frac{d}{c}( h(z)+1-F).
\end{align*}
In \eqref{r32} we can first reduce the term without $h(z)$ then we can cancel one $h(z)$, which is allowed in neighbourhood of $0$, and thus we conclude that
\begin{align}
\frac{h(z)}{zD(z)}=\frac{\frac{1}{cC_1F}(cF+\frac{d}{c}-c)}{1-(\frac{d}{c}-c)\frac{1}{cC_1F}zD(z)}.
\end{align}
Note that by freeness $c=\varphi\left(\V^{-1}(\I-\U)^{-1}\right)=\varphi\left(\V^{-1}\right)
\varphi\left((\I-\U)^{-1}\right)=C_1F$,
if we define $\alpha=\frac{d-c^2}{c^2C_1F}=\frac{d-c^2}{c^3}$ and $\lambda=1+\frac{c^2F}{d-c^2}$, we can rewrite the last equation as
\begin{align}
\label{hD3}
\frac{h(z)}{zD(z)}=\frac{\lambda\alpha}{1-\alpha zD(z)}.
\end{align} 
Taking into account that $\lim_{z\to 0}zD(z)=0$, $h(z)=zD(z)r(zD(z))$ and that $r$ is analytic at $0$ we obtain
\begin{align*}
r(z)=\frac{\lambda\alpha}{1-\alpha z},
\end{align*}
which means that $\V$ has the free Poisson distribution with parameters $\lambda,\alpha$. Note that by the Cauchy-Schwarz inequality $d> c^2$, so we have $\alpha> 0$ and $\lambda> 1$ which means that $\V$ does not have an atom at $0$.

The distribution of $\U$ can be determined exactly in the same way as in the proof of Theorem \ref{th1}. One can see that equations \eqref{hD3} and \eqref{r31} are equivalent to equations \eqref{hDel2} and \eqref{r2} it allows to find $S$-transform of $\U$. So we conclude that $\U$ has a free binomial distribution with parameters $\sigma=\frac{(F-1)(\lambda-1)}{F}=\frac{(F-1)c^2}{d-c^2}$ and $\theta=\frac{F+\lambda-1}{F}=\frac{d}{d-c^2}$. Note that $\theta>1$ which means that distribution of $\U$ does not have an atom at $1$. Since $F>1$ we also have $\sigma>0$ and by direct calculation one can see that the continous part of the distribution of $\U$ is separated from $1$, in particular, $x_+=\frac{1}{(d+c^2(F-1))^2}\left(\sqrt{d(d-c^2)}+\sqrt{c^4F(F-1)}\right)^2$.
\end{proof}
\subsection*{Acknowledgement} The author thanks J. Weso\l{}owski for many helpful comments and discussions. This research was partially supported by NCN
grant 2012/05/B/ST1/00554.
\bibliographystyle{plain}
\bibliography{Bibl}
\end{document}